\newtheorem*{acknowledgement}{Acknowledgement}
\newtheorem{proposition}{Proposition}
\newtheorem{remark}{Remark}
\newtheorem{theorem}{Theorem}
\newtheorem{example}{Example}
\numberwithin{equation}{section}
\begin{document}

\title[Quasi-Einstein manifolds]{Noncompact quasi-Einstein manifolds\\ conformal to a Euclidean space}
\author{E. Ribeiro Jr.} 
\author{K. Tenenblat}

\address[E. Ribeiro Jr.]{Universidade Federal do Cear\'a - UFC, Departamento  de Matem\'atica, Campus do Pici,  Av. Humberto Monte, Bloco 914, 60455-760, Fortaleza - CE, Brazil}\email{ernani@mat.ufc.br}

\address[K. Tenenblat]{Universidade de Bras\'ilia - UnB, Departamento de Matem\'atica, 70910-900, Bras\'ilia - DF, Brazil} 
\email{k.tenenblat@mat.unb.br}

\thanks{E. Ribeiro was partially supported by grants from CNPq/Brazil [Grant: 305410/2018-0], PRONEX - FUNCAP /CNPq/ Brazil and CAPES/ Brazil - Finance Code 001.}

\thanks{K. Tenenblat was partially supported by CNPq/Brazil  [Grant: 312462/2014-0], CAPES/ Brazil - Finance Code 001 and FAPDF/Brazil [Grant: 0193.001346/2016]}

\keywords{Einstein manifolds; quasi-Einstein manifolds; conformal metrics; translation group}
\subjclass[2010]{Primary 53C21, 53C25; Secondary 53C24}


\newcommand{\spacing}[1]{\renewcommand{\baselinestretch}{#1}\large\normalsize}
\spacing{1.2}

\begin{abstract}
The goal of this article is to investigate nontrivial $m$-quasi-Einstein ma\-ni\-folds globally con\-formal to an $n$-dimensional Euclidean space. By considering such manifolds, whose conformal factors and potential functions are  invariant under the action of an $(n-1)$-dimensional translation group, we provide a complete classification when $\lambda=0$ and $m\geq 1$ or $m=2-n.$
\end{abstract}

\maketitle

\section{Introduction}

A distinguished problem in Riemannian geometry is to find canonical metrics on a given manifold. For example, it is common to look for Einstein metrics on a given smooth manifold. Einstein and Hilbert proved that the critical points of the total scalar curvature functional, restricted to the set of smooth Riemannian structures on a compact manifold $M^n$ of unitary volume, must be ne\-cessarily Einstein (see \cite[Theorem 4.21]{Besse}), and this suggests that Einstein metrics are in fact special. They are not only interesting in themselves but they are also related to many important topics of Riemannian geometry. In this scenario, it is very important to build new explicit examples of Einstein metrics. As discussed by Besse \cite[pg. 265]{Besse}, one promising way to construct Einstein metrics is by imposing symmetry, such as by considering warped products. It is known that the $m$-Bakry-Emery Ricci tensor, which appeared
previously in \cite{bakry,Besse} and \cite{Qian}, is useful as an attempt to better understand Einstein warped products. More precisely, the $m$-Bakry-Emery Ricci tensor is given by
\begin{equation}
\label{bertens}
Ric_{f}^{m}=Ric+\nabla ^2f-\frac{1}{m}df\otimes df,
\end{equation} where $f$ is a smooth function on $M^n$ and $\nabla ^2f$ stands for the Hessian of $f.$ We remark that it is also used to study the weighted measure $d\mu=e^{-f}dx,$ where $dx$ is the Riemann-Lebesgue measure determined by the metric. 

According to \cite{CaseShuWey}, a complete Riemannian manifold $(M^n,\,g),$ $n\geq 2,$ will be called $m$-{\it quasi-Einstein manifold}, or simply {\it quasi-Einstein manifold}, if there exists a smooth potential function $f$ on $M^n$ satisfying the following fundamental equation
\begin{equation}
\label{eqqem}
Ric_{f}^{m}=Ric+\nabla ^2f-\frac{1}{m}df\otimes
df=\lambda g,
\end{equation} for some constants $\lambda$ and $m\neq 0.$ It is also important to recall that, on a quasi-Einstein manifold, there is an indispensable constant  $\mu$ such that 
\begin{equation}\label{2eq}
\Delta f-|\nabla f|^{2} = m\lambda-m\mu e^{\frac{2}{m}f}.
\end{equation} For more details, we refer the reader to \cite{Kim}.

We say that a quasi-Einstein manifold is \emph{trivial} if its potential function $f$ is constant, otherwise, we say that it is \emph{nontrivial}. Hence, the triviality implies that $M^n$ is an Einstein manifold. An $\infty$-quasi-Einstein manifold is a gradient Ricci soliton. Ricci solitons model the formation of singularities in the Ricci flow and correspond to self-similar solutions, i.e., solutions which evolve along symmetries of the flow, see \cite{Cao} and references therein for more details on this subject. We also remark that $1$-quasi-Einstein manifolds are more commonly called {\it static metrics} and such metrics have connections to the prescribed scalar curvature pro\-blem, the positive mass theorem and general relativity. On the other hand, when $m$ is a positive integer it corresponds to a warped product Einstein metric (see \cite{Besse,CaseShuWey}). Indeed, a motivation to study quasi-Einstein metrics on a Riemannian manifold is its direct relation to the existence of Einstein warped products, which also have different pro\-perties compared with the gradient Ricci solitons; for more details see, for instance, Theorem 1 in \cite{Ernani2} or Corollary 9.107 in \cite[pg. 267]{Besse}. Another important motivation comes from the study of diffusion operators by Bakry and \'Emery \cite{BE}.

In \cite{BRS,Besse} and \cite{Wang1}, the authors gave some examples of complete $m$-quasi-Einstein manifolds with $\lambda<0$ and arbitrary $\mu,$ as well as examples of quasi-Einstein manifolds with $\lambda=0$ and $\mu>0.$ Case \cite{Case} showed that complete $m$-quasi-Einstein manifolds with $\lambda=0$ and $\mu\leq0$ are trivial. While Qian \cite{Qian} proved that complete $m$-quasi-Einstein manifolds with $\lambda>0$ must be compact. Moreover, by Kim and Kim \cite{Kim} nontrivial compact quasi-Einstein manifolds must have $\lambda>0.$ Thereby, it follows that a complete  nontrivial quasi-Einstein manifold is compact if, and only if, $\lambda>0$ (see also \cite[Theorem 4.1]{HPW}). An example of nontrivial compact $m$-quasi-Einstein manifold with $\lambda>0,$ $m>1$ and $\mu>0$ was obtained in \cite{LuePage}. Other complete examples were obtained by He, Petersen and Wylie \cite{HPW} on the hyperbolic space. An alternative description of the known examples on hyperbolic space was given by Case \cite{CaseT} using tractors; see also \cite{rimoldi,rimoldi2,Wang2} for further related results. 

In this paper, we will consider nontrivial $m$-quasi Einstein manifolds (not necessarily complete) with $\lambda\leq 0,$ which are globally conformal to an $n$-dimensional Euclidean space, whose conformal factors and potential functions  are invariant  under the action of an $(n-1)$-dimensional translation group.   Solutions of geometric PDEs, which are invariant under the action  of such a group,  were  obtained in \cite{BPK}, where  Barbosa, Pina and Tenenblat studied such solutions for  gradient Ricci solitons conformal to an $n$-dimensional pseudo-Euclidean space.  In particular, they classified all such gradient Ricci solitons in the steady case (i.e. $\lambda=0$). Later, similar kind of solutions were obtained for gradient Yamabe solitons in \cite{KB}; for the Ricci curvature equation and the Einstein field equation in \cite{PS} and \cite{BLP}.

Now we may state our main results. The first one provides a uniqueness result for noncompact, nontrivial $m$-quasi-Einstein manifolds, with $\lambda=0$ and $m+n-2=0,$ that are conformal to a Euclidean space. More precisely, we have established the following result.

\begin{theorem}
\label{thmnovo}
Let  $(\mathbb{R}^{n},\,g),$ $n\ge 3,$ be a Euclidean space with coordinates $x=(x_{1},...,x_{n})$ and $g_{ij}=\delta_{ij}.$ Consider smooth functions 
$\varphi(\xi)$ and $u(\xi)>0$,  $\xi=\sum_{i=1}^{n}\alpha_{i}x_{i}$,  
$\alpha_{i}\in \mathbb{R},$ where without loss of generality we consider $\sum_{i=1}^n\alpha_i^2=1$.  Then $\overline{g}=\frac{1}{\varphi^{2}}g$   
is a nontrivial $m$-quasi-Einstein metric with potential function 
$f=-m \log u,$ $\lambda=0$, $\varphi$ non-constant and $m+n-2=0$ if, and only if, $\varphi$ and $u$ are given by 
\begin{equation}
u=C_2|\xi+C_1|e^{-C_3(\xi+C_1)^{n-1}}
\qquad\hbox{and} \qquad \varphi=\pm C_4 e^{C_3(\xi+C_1)^{n-1}}, 
\end{equation}
where $C_1\in\mathbb{R}$ and $C_2,\,C_3,\, C_4$ are positive real numbers. Moreover, the sign of $\varphi$  is the sign of $\xi+C_1\neq 0$ and the potential function  $f$ is given by  
\[
f=(n-2)\Big[\log \big(C_2|\xi+C_1|\big)-C_3\big(\xi+C_1\big)^{n-1}\Big], 
\]
which is defined on $\mathbb{R}^n\setminus \Pi,$ where $\Pi$ is the hyperplane $\xi+C_1= 0$. 
\end{theorem}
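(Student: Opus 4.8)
The plan is to write the $m$-quasi-Einstein condition \eqref{eqqem} for $\overline g=\varphi^{-2}g$, with $\lambda=0$, as a system of ordinary differential equations in the single variable $\xi$, to integrate it using the hypothesis $m+n-2=0$, and then to check the converse by direct substitution.

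\emph{Setting up the ODEs.} First I would record the conformal transformation laws under $\overline g=\varphi^{-2}g$. Since $g$ is flat, for $n\geq 3$,
\[
\overline{Ric}=\frac{1}{\varphi^{2}}\Big[(n-2)\,\varphi\,\mathrm{Hess}\,\varphi+\big(\varphi\,\Delta\varphi-(n-1)|\nabla\varphi|^{2}\big)g\Big],\qquad
\mathrm{Hess}_{\overline g}f=\mathrm{Hess}\,f+\frac{d\varphi\otimes df+df\otimes d\varphi}{\varphi}-\frac{\langle\nabla\varphi,\nabla f\rangle}{\varphi}\,g,
\]
all operators on the right being Euclidean. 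Since everything depends only on $\xi=\sum_i\alpha_ix_i$ with $\sum_i\alpha_i^2=1$, for $h=h(\xi)$ one has $\partial_i h=\alpha_i h'$, $\partial_i\partial_j h=\alpha_i\alpha_j h''$, $\Delta h=h''$ and $|\nabla h|^{2}=(h')^{2}$. Substituting into $\overline{Ric}+\mathrm{Hess}_{\overline g}f-\frac1m\,df\otimes df=0$ and separating the coefficients of the two linearly independent symmetric tensors $d\xi\otimes d\xi$ and $g$ turns the problem into the system
\begin{align}
&(n-2)\frac{\varphi''}{\varphi}+f''+2\frac{\varphi'f'}{\varphi}-\frac{(f')^{2}}{m}=0,\label{eq:skI}\\
&\frac{\varphi''}{\varphi}-(n-1)\frac{(\varphi')^{2}}{\varphi^{2}}-\frac{\varphi'f'}{\varphi}=0.\label{eq:skII}
\end{align}
(Equation \eqref{2eq} yields a further relation, but it is a consequence of \eqref{eq:skI}--\eqref{eq:skII}, so it need not be imposed.)

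\emph{Using $m+n-2=0$ and integrating.} Writing $f=-m\log u=(n-2)\log u$ and substituting $m=2-n$ in \eqref{eq:skI}, the terms in $(u'/u)^2$ cancel and \eqref{eq:skI} reduces to $\varphi''/\varphi+u''/u+2\varphi'u'/(\varphi u)=0$, i.e. $(\varphi u)''=0$; hence $\varphi u=a\xi+b$. Since $\varphi$ is non-constant and $u>0$ on the domain, the main branch is $a\neq 0$, in which case $\varphi u=a(\xi+C_1)$ with $C_1=b/a$ and the zero locus $\xi+C_1=0$ is the hyperplane $\Pi$ to be removed; the subcase $a=0$, in which $\varphi$ is a pure exponential in $\xi$, is treated separately. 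Differentiating $\log|\varphi|+\log u=\log|a(\xi+C_1)|$ gives $\varphi'/\varphi+u'/u=1/(\xi+C_1)$, so $f'=(n-2)u'/u=(n-2)\big(1/(\xi+C_1)-\varphi'/\varphi\big)$. Inserting this into \eqref{eq:skII} and simplifying reduces it to
\[
\Big(\log\Big|\frac{\varphi'}{\varphi}\Big|\Big)'=\frac{n-2}{\xi+C_1},
\]
which integrates to $\varphi'/\varphi=(n-1)C_3(\xi+C_1)^{n-2}$, hence $\varphi=\pm C_4\,e^{C_3(\xi+C_1)^{n-1}}$; then $u=a(\xi+C_1)/\varphi=C_2|\xi+C_1|\,e^{-C_3(\xi+C_1)^{n-1}}$. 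The conditions $u>0$ and $a=\varphi u/(\xi+C_1)\in\mathbb{R}$ fix the sign of $\varphi$ to coincide with that of $\xi+C_1$ and force $C_2,C_3,C_4>0$, and $f=(n-2)\log u$ gives the stated potential on $\mathbb{R}^n\setminus\Pi$.

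\emph{Converse and main difficulty.} For the ``if'' direction I would substitute the explicit $\varphi$ and $u$ back into \eqref{eq:skI}--\eqref{eq:skII} and verify them directly; since $\varphi$ and $f$ are non-constant this produces a nontrivial $m$-quasi-Einstein metric with $\lambda=0$ and $m+n-2=0$. The heart of the matter is the ``only if'' direction: the decisive simplification, special to $m+n-2=0$, is the collapse of \eqref{eq:skI} to $(\varphi u)''=0$, after which one must integrate \eqref{eq:skII} and carefully manage the signs, the two components of $\mathbb{R}^n\setminus\Pi$, and the non-triviality and positivity constraints that isolate the displayed family. The conformal computation and the tensor decomposition in the first step are routine but need to be carried out with some care.
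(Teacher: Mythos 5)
Your proposal follows essentially the same route as the paper's proof: reduce the quasi-Einstein equation for $\overline g=\varphi^{-2}g$ to the two ODEs \eqref{eqthmA1}--\eqref{eqthmA2} of Proposition~\ref{lemA} (your system \eqref{eq:skI}--\eqref{eq:skII} is exactly that system after substituting $f=-m\log u$), use $m+n-2=0$ to integrate the first equation, and then integrate the second. Two of your organizational choices are in fact cleaner than the paper's: decomposing the tensor equation along the independent tensors $d\xi\otimes d\xi$ and $g$ avoids the paper's separate treatment of the degenerate case in which all products $\alpha_i\alpha_j$, $i\ne j$, vanish; and recognizing the first equation as literally $(\varphi u)''=0$ is more transparent than the paper's reduction to $y'=-y^2$ for $y=(\log\varphi u)'$ followed by division by $y^2$.

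The one substantive issue is the subcase you set aside. If $\varphi u=b$ is constant (your $a=0$), then $u'/u=-\varphi'/\varphi$ and, with $m=2-n$ and $\lambda=0$, equation \eqref{eqthmA2} collapses to $\varphi''/\varphi-(\varphi'/\varphi)^2=0$, i.e. $\varphi=\pm e^{k\xi+c}$ with $k\neq0$ and $u=b/\varphi>0$. One checks directly that this pair satisfies both ODEs (equivalently, that $\overline g=e^{-2k\xi}g$ with $f=-(n-2)k\xi$ is a nontrivial $(2-n)$-quasi-Einstein metric with $\lambda=0$, defined on all of $\mathbb{R}^n$). So ``treated separately'' cannot simply end with ``and it is excluded'': this branch produces solutions of the system that are not of the form displayed in the theorem, and your writeup does not say why they do not appear in the conclusion. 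To be fair, the paper's own proof has the same blind spot — it discards the solution $y\equiv0$ of $y'=-y^2$ when it writes $\bigl(1/y\bigr)'=1$ — so you have actually put your finger on the genuinely delicate point; but as written you have not resolved it. A smaller remark of the same kind: integrating $\bigl(\log|\varphi'/\varphi|\bigr)'=(n-2)/(\xi+C_1)$ determines $\varphi'/\varphi$ only up to a sign, and $u=C_2|\xi+C_1|e^{-C_3(\xi+C_1)^{n-1}}$ is positive for either sign of $C_3$, so your claim that positivity of $u$ forces $C_3>0$ needs an argument (the paper likewise selects the positive branch without comment).
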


In our next result, we characterize the noncompact $m$-quasi-Einstein manifolds with $\lambda=0$ and $m\ge 1$ that are conformal to a Euclidean space. To be precise, we have the following result.

\begin{theorem}
\label{thmB1}
Let  $(\mathbb{R}^{n},\,g),$ $n\ge 3,$ be a Euclidean space with coordinates $x=(x_{1},...,x_{n})$ and $g_{ij}=\delta_{ij}.$ Consider smooth functions $\varphi(\xi)$ and $u(\xi)$, where $\xi=\sum_{i=1}^{n}\alpha_{i}x_{i},$ $\alpha_{i}\in \mathbb{R}$ and 
$\sum_{i=1}^n \alpha_i^2=1$. Then $(\mathbb{R}^{n},\,\overline{g}=\frac{1}{\varphi^{2}}g,\,f)$ is a nontrivial $m$-quasi-Einstein manifold with $\varphi$ nonconstant, $m\ge 1$, $\lambda=0$ and $f=-m\log u$ as potential function if, and only if, $u$ is determined in terms of $\varphi$ by
\begin{equation}
\label{uvarphi}
u=C \varphi^{\frac{n-1}{m}} (\varphi')^{-\frac{1}{m}},
\end{equation} where $C$ is a positive constant. Moreover, $\varphi(\xi)$ is given implicitly as follows:

\begin{enumerate}
\item[i)] If $m=1,$ then 
\begin{equation}\label{intvarphi1}
\int{\frac{exp(\frac{C_1}{2\varphi^{2(n-1)} })}   {\varphi^{\frac{n}{2}}} }\,d\varphi=C_2\,\xi+C_3, 
\end{equation} 
where $C_{2}\neq 0$, $C_1$ and $C_{3}$ are constants.
\item[ii)]  If $m> 1,$ then $\varphi$ is implicitly given by    
\begin{equation}\label{intvarphi2}
\int{ \frac{d\varphi}{\left( C_1\varphi^{\sqrt{b}}-1\right)^{\frac{m}{m-1}}\varphi^{\frac{a}{2}}}}=C_2\xi+C_3, 
\end{equation}  where $C_1\neq 0$, $C_2\neq 0$ and $C_3$ are constants. Additionally, 
$$a=\frac{-2m}{m-1} \left[(m-1)+\frac{(n-1)}{m}+\frac{\sqrt{b}}{2}\right]$$ 
and $b$ is a positive constant given by 
$$ b= 4\left[(m-1)^2+\frac{n-1}{m}(3m+n-4) \right].$$ 
\end{enumerate}
\end{theorem}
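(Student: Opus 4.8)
\emph{Strategy.} The idea is to convert the $m$-quasi-Einstein equation $Ric_f^m=0$ into a system of ODEs in the single variable $\xi$, integrate it, and then establish the converse by direct substitution. Since $g$ is flat, $Ric\equiv 0$, so for $\overline{g}=\varphi^{-2}g$ the conformal change formulas read $\overline{Ric}=\frac{n-2}{\varphi}\nabla^2\varphi+\bigl(\frac{\Delta\varphi}{\varphi}-\frac{(n-1)|\nabla\varphi|^2}{\varphi^2}\bigr)g$ and $\overline{\nabla}^2 f=\nabla^2 f+\frac1\varphi\bigl(d\varphi\otimes df+df\otimes d\varphi\bigr)-\frac1\varphi\langle\nabla\varphi,\nabla f\rangle g$. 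Writing $f=-m\log u$ and using that $\varphi$ and $u$ depend only on $\xi$ with $\sum_i\alpha_i^2=1$, every term of $Ric_f^m=\overline{Ric}+\overline{\nabla}^2 f-\frac1m\,df\otimes df$ is a linear combination of $\delta_{ij}$ and $\alpha_i\alpha_j$; since these two symmetric tensors are independent for $n\ge 2$, the equation $Ric_f^m=0$ is equivalent to the pair
\[
\frac{(n-2)\varphi''}{\varphi}-\frac{m u''}{u}-\frac{2m\varphi'u'}{\varphi u}=0,\qquad
\frac{\varphi''}{\varphi}-\frac{(n-1)(\varphi')^2}{\varphi^2}+\frac{m\varphi'u'}{\varphi u}=0,
\]
where $'=d/d\xi$. (One also has the auxiliary identity \eqref{2eq} with some constant $\mu$, but this holds automatically on any quasi-Einstein manifold and imposes nothing further here.)

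\emph{Eliminating $u$.} The second equation is $\frac{u'}{u}=\frac1m\bigl((n-1)\frac{\varphi'}{\varphi}-\frac{\varphi''}{\varphi'}\bigr)$, which integrates at once to \eqref{uvarphi}: $u=C\varphi^{(n-1)/m}(\varphi')^{-1/m}$. Plugging this back into the first equation (and using the second once more to simplify the $u$-terms) produces a single third-order autonomous ODE for $\varphi$:
\[
m\frac{\varphi'''}{\varphi'}-(m+1)\frac{(\varphi'')^2}{(\varphi')^2}+(m+2n-2)\frac{\varphi''}{\varphi}-(n-1)(m+n-1)\frac{(\varphi')^2}{\varphi^2}=0 .
\]

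\emph{Integrating the $\varphi$-equation.} As $\xi$ is absent, I would take $\varphi$ itself as the independent variable and put $\varphi'=G(\varphi)$, so $\varphi''=GG'$ and $\varphi'''=G\bigl(G'^2+GG''\bigr)$; the equation becomes second order, $mGG''-G'^2+(m+2n-2)\varphi^{-1}GG'-(n-1)(m+n-1)\varphi^{-2}G^2=0$, and the substitution $L=\log G$ turns it into the Riccati equation $mL''+(m-1)(L')^2+(m+2n-2)\frac{L'}{\varphi}-\frac{(n-1)(m+n-1)}{\varphi^2}=0$; this is the heart of the argument and the step I expect to be most delicate. When $m=1$ the quadratic term vanishes, the equation is linear of first order in $M=L'$, and the integrating factor $\varphi^{2n-1}$ gives $L'=\frac{n}{2\varphi}+\mathrm{const}\cdot\varphi^{1-2n}$, hence $\varphi'=G=\mathrm{const}\cdot\varphi^{n/2}\exp\bigl(\mathrm{const}\cdot\varphi^{-2(n-1)}\bigr)$; a last quadrature of $d\xi=d\varphi/G$ yields \eqref{intvarphi1}. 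When $m>1$ the Riccati equation admits the particular solution $M_0=c_0/\varphi$ with $(m-1)c_0^2+2(n-1)c_0-(n-1)(m+n-1)=0$, and $M=c_0/\varphi+1/W$ reduces it to a linear equation for $W$, solved by the integrating factor $\varphi^{-D/m}$ with $D=2(m-1)c_0+m+2n-2$; equivalently, the Riccati equation linearizes to the Cauchy--Euler equation $m^2\varphi^2Z''+m(m+2n-2)\varphi Z'-(m-1)(n-1)(m+n-1)Z=0$, whose indicial exponents are exactly the quantities that enter the constants $a$ and $b$. Recovering $G$ from $L$, and then $\xi$ from $d\xi=d\varphi/G$, gives the implicit relation \eqref{intvarphi2}, the exponents $a/2$ and $\sqrt b$ being the powers of $\varphi$ produced by these two quadratures.

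\emph{Converse and nontriviality.} For the ``if'' direction I would simply insert the stated $\varphi$ and $u$ into the two ODEs of the first paragraph and verify them; nontriviality is then automatic, because $u$ — and hence $f=-m\log u$ — is nonconstant whenever $\varphi$ is. The single real obstacle is the explicit integration in the previous paragraph; once the order reduction $\varphi'=G(\varphi)$ and the resulting Riccati (resp.\ Cauchy--Euler) structure are recognized, the separate $m=1$ and $m>1$ computations are routine.
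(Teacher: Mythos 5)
Your strategy coincides with the paper's: reduce to the two ODEs of Proposition \ref{lemA}, integrate \eqref{eqthmA2} to obtain \eqref{uvarphi}, substitute back into \eqref{eqthmA1} to get a third-order autonomous ODE for $\varphi$, lower the order by taking $\varphi$ as the independent variable (your $L=\log\varphi'$ and $M=L'$ are exactly the paper's $w=(\varphi')^2$ and $v=w'/w=2M$), and solve the resulting Riccati equation, splitting into $m=1$ (linear case) and $m>1$ (two particular power solutions / Cauchy--Euler linearization). Your $m=1$ computation reproduces \eqref{intvarphi1} exactly.

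There is, however, a discrepancy at the elimination step that you must confront. Your third-order ODE carries the coefficient $\tfrac{m+2n-2}{m}=1+\tfrac{2(n-1)}{m}$ on $\tfrac{\varphi''}{\varphi}$, whereas the paper's equation \eqref{eqdifvarphi} has $P=2m-1+\tfrac{2(n-1)}{m}$; these agree only when $m=1$. A direct test supports your coefficient: for $n=3$, $m=2$ the system \eqref{eqthmA1}--\eqref{eqthmA2} admits power solutions $\varphi=\xi^{k}$, $u=c\,\xi^{(k+1)/2}$ precisely when $3k^2+6k-1=0$, which is what your equation predicts, while the paper's \eqref{eqdifvarphi} would require $k^2-10k+1=0$. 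Carrying your (correct) Riccati/Cauchy--Euler computation through for $m>1$, the indicial roots are $r_{\pm}=\frac{-(n-1)\pm\sqrt{m(n-1)(m+n-2)}}{m}$ and one lands on \eqref{intvarphi2} with
\[
b=\frac{4(n-1)(m+n-2)}{m},
\]
not with the value of $b$ printed in the statement (for $n=3$, $m=2$: $12$ versus $24$). So your closing claim that the exponents produced are ``exactly'' the stated $a$ and $b$ fails against the printed formulas; the mismatch traces to the paper's constant $P$ (and hence its $a$ and $b$), not to your derivation, but as written your proposal does not establish item ii) in the stated form and should make the corrected constants explicit. A second, minor gap: nontriviality is not ``automatic'' from \eqref{uvarphi} alone, since $u$ can be constant with $\varphi$ nonconstant (whenever $\varphi'=c\varphi^{n-1}$); one must invoke \eqref{eqthmA1} to see that $u$ constant forces $\varphi''=0$ and hence $\varphi$ constant.
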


We highlight that in both theorems above we have considered metrics $\bar{g}$ non-homothetic to the Euclidean metric $g.$ Indeed, the homothetic case occurs for any $m\neq 0,$ when $\lambda=0,$ the function $u$ is linear on $\xi$ and $f$ is defined on a half space. More precisely, in the homothetic case we immediately have the following observation.

\begin{remark} \label{rm1}  Consider $(\mathbb{R}^n,\bar{g}),$ where $\bar{g}=g/\varphi$ is 
homothetic to the Euclidean metric $g,$ i.e.,
  $\varphi=\gamma \in \mathbb{R}\setminus \{0\}$ and $u(\xi)>0,$ where $\xi=\sum_{i=1}^{n}\alpha_{i}x_{i}$,  $\alpha_{i}\in \mathbb{R}$ and without loss of generality we consider $\sum_{i=1}^n\alpha_i^2=1.$  Then $\bar{g}=\frac{g}{\gamma^2}$ is a nontrivial $m$-quasi-Einstein metric with potential function $f=-m\log u,$ $m\neq 0$ and $\lambda\leq 0$ if, and only if, $\lambda=0$ and $u(\xi)=a\xi+b,$ where $a\neq 0,$ $b$ are real numbers and $f$ is defined on the half space $a\xi+b>0$.    
\end{remark}

The proofs of Theorems \ref{thmnovo} and \ref{thmB1} will be presented in Section \ref{sec3}. We emphasize that the nontrivial quasi-Einstein metrics exhibited in this article are different from the previously known examples obtained by He-Petersen-Wylie \cite{HPW} (see also \cite{CaseT} and \cite{Besse}). Furthermore, it is important to highlight that Theorem \ref{thmB1} provides all quasi-Einstein manifolds conformal to a Euclidean space, with $\lambda=0$  and $m\ge 1,$ whose  nonconstant conformal factors and potential functions are invariant under the action of an $(n-1)$-dimensional translation group. In particular, by choosing $C_{1}=0$ in the first item of Theorem \ref{thmB1}, we obtain the following explicit example. 
 
 \vspace{.1in} 

\begin{example} Consider $(\mathbb{R}^n, \bar{g}),$ where the metric $\bar{g}$  is  conformal to the Euclidean metric $g$ given by   
\begin{equation*}
\bar{g}= \left(\frac{(n-2)^2(C_2\xi+C_3)^2}{4}\right)^\frac{2}{n-2} g.
\end{equation*}
Let 
\begin{equation*}
f=-\log\left(\frac{-2C}{C_2(n-2)(C_2\xi+C_3)} \right), 
\end{equation*} where $C>0$, $C_2\neq 0$ and $C_3$ are real numbers. Then the half space where $-C_2(C_2\xi+C_3)>0$ is a $1$-quasi-Einstein metric with potential function $f$ and $\lambda=0$. This example is obtained by considering $C_1=0$ in \eqref{intvarphi1} and by integrating the left hand side.
\end{example}

We point out that $C_1\neq 0$ in \eqref{intvarphi2}. Besides, it is not clear whether one can obtain simple solutions in case $ii)$ of Theorem \ref{thmB1}. For instance, by choosing $n=4$ and $m=5$ the integration of the left hand side provides hypergeometric functions.

\section{Preliminaries}

In this section,  we review some basic facts and we prove a couple of propositions that  will be useful in the proof of the main results. First of all, assuming that $m<\infty,$ we may consider the function $u = e^{-\frac{f}{m}}$ on $M^n.$ Hence, we immediately get $$\nabla u = -\frac{u}{m}\nabla f$$ as well as

\begin{equation}\label{fu}
Hess f - \frac{1}{m}df \otimes df = -\frac{m}{u}Hess\, u.
\end{equation} 
In particular, notice that (\ref{eqqem}) and (\ref{fu}) yield 

\begin{equation}\label{quasiforu}
Ric -\frac{m}{u}Hess\, u=\lambda g. 
\end{equation} 
Moreover, taking into account (\ref{eqqem}) and (\ref{2eq}), it is not difficult to show that
\begin{equation}\label{fundamentalequationforu}
\frac{u^2}{m}(R-\lambda n) + (m-1)|\nabla u|^2 = -\lambda u^2 + \mu,
\end{equation} where $R$ is the scalar curvature of $M^n.$

In the sequel we discuss two key results that will play a crucial role in the proofs of the main theorems. The first one provides the relation between the potential function of an $m$-quasi Einstein manifold conformal to the Euclidean space and its associated conformal factor.

\begin{proposition}
\label{thm1}
Let  $(\mathbb{R}^{n},\,g),$ $n\ge 3,$  be a Euclidean space with coordinates $x=(x_{1},...,x_{n})$ and $g_{ij}=\delta_{ij}.$ Consider a smooth function $u:\mathbb{R}^{n}\to \mathbb{R}$, $u>0$. Then, there exists a metric $\overline{g}=\frac{1}{\varphi^{2}}g$ such that $(\mathbb{R}^{n},\,\overline{g})$ is a nontrivial $m$-quasi-Einstein manifold with $f=-m\log u$ as a potential function if, and only if, the functions $\varphi$ and $u$ are related as follows

\[
\frac{m}{u}\big(u_{x_{i}x_{j}}+\frac{\varphi_{x_{j}}}{\varphi}u_{x_{i}}+\frac{\varphi_{x_{i}}}{\varphi}u_{x_{j}}\big)= (n-2)\frac{1}{\varphi}\varphi_{x_ix_j},\qquad \hbox{for}\,\,\,i\neq j,
\] 
and for all $i$
\[
\frac{m}{u}\big(u_{x_{i}x_{i}}+2\frac{\varphi_{x_{i}}}{\varphi}u_{x_{i}}-\sum_{k}\frac{\varphi_{x_{k}}}{\varphi}u_{x_{k}}\big)=(n-2)\frac{1}{\varphi}\varphi_{x_ix_i}+\frac{\Delta_g\varphi}{\varphi}-(n-1)\frac{|\nabla_{g}\varphi|^{2}}{\varphi^{2}}-\frac{\lambda}{\varphi^{2}}.
\]

\end{proposition}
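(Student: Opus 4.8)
The plan is to compute the Ricci tensor and Hessian of $u$ for the conformal metric $\overline{g} = \varphi^{-2} g$ in terms of the flat data, and then read off equation \eqref{quasiforu}, which by Proposition~\ref{thm1}'s hypothesis (namely $f = -m\log u$) is equivalent to the $m$-quasi-Einstein equation \eqref{eqqem} via the identity \eqref{fu}. Writing $\overline{g} = e^{2\rho} g$ with $e^{\rho} = 1/\varphi$, i.e. $\rho = -\log\varphi$, the standard conformal change formula gives
\[
\overline{Ric} = Ric_g - (n-2)\big(\nabla^2_g \rho - d\rho\otimes d\rho\big) - \big(\Delta_g \rho + (n-2)|\nabla_g\rho|^2\big) g,
\]
and since $g$ is flat, $Ric_g = 0$. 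Substituting $\rho = -\log\varphi$, so that $\rho_{x_i} = -\varphi_{x_i}/\varphi$ and $\rho_{x_ix_j} = -\varphi_{x_ix_j}/\varphi + \varphi_{x_i}\varphi_{x_j}/\varphi^2$, turns each term into an explicit expression in $\varphi$ and its flat derivatives; in particular $\nabla^2_g\rho - d\rho\otimes d\rho = -\tfrac{1}{\varphi}\nabla^2_g\varphi$, which is the source of the clean $(n-2)\varphi_{x_ix_j}/\varphi$ terms appearing on the right-hand sides.

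The second ingredient is the Hessian of $u$ with respect to $\overline{g}$. Using the Christoffel symbols of a conformal metric, $\overline{\Gamma}^k_{ij} = \delta^k_i\rho_{x_j} + \delta^k_j\rho_{x_i} - \delta_{ij}\rho_{x_k}$, one gets
\[
\overline{\nabla}^2 u \,(\partial_i,\partial_j) = u_{x_ix_j} - \rho_{x_i}u_{x_j} - \rho_{x_j}u_{x_i} + \delta_{ij}\langle \nabla_g\rho, \nabla_g u\rangle,
\]
and again replacing $\rho_{x_i} = -\varphi_{x_i}/\varphi$ produces exactly the combinations $u_{x_ix_j} + \tfrac{\varphi_{x_j}}{\varphi}u_{x_i} + \tfrac{\varphi_{x_i}}{\varphi}u_{x_i}$ (off-diagonal) and the extra $-\delta_{ij}\sum_k \tfrac{\varphi_{x_k}}{\varphi}u_{x_k}$ term (diagonal) visible in the statement. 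Then I would plug $\overline{Ric}$ and $\overline{\nabla}^2 u$ into \eqref{quasiforu}, i.e. $\overline{Ric} - \tfrac{m}{u}\overline{\nabla}^2 u = \lambda \overline{g} = \tfrac{\lambda}{\varphi^2} g$, and separate the resulting tensor identity into its off-diagonal ($i\neq j$) and diagonal ($i=j$) components. The off-diagonal part has no contribution from the scalar terms $\big(\Delta_g\rho + (n-2)|\nabla_g\rho|^2\big)g$ nor from $\lambda\overline{g}$, so it collapses directly to the first displayed equation; the diagonal part retains those scalar terms, which after rewriting in $\varphi$ give precisely $\tfrac{\Delta_g\varphi}{\varphi} - (n-1)\tfrac{|\nabla_g\varphi|^2}{\varphi^2} - \tfrac{\lambda}{\varphi^2}$, yielding the second equation. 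Conversely, the same computation shows that if $\varphi$ and $u$ satisfy these two families of equations then \eqref{quasiforu} holds, hence $\overline{g}$ is $m$-quasi-Einstein with the stated potential, so the equivalence is genuinely an "if and only if."

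The main bookkeeping obstacle is keeping the conformal-change conventions consistent — in particular the sign and placement of the $d\rho\otimes d\rho$ versus $|\nabla\rho|^2 g$ terms in $\overline{Ric}$, and making sure the trace term $\langle\nabla_g\rho,\nabla_g u\rangle$ from $\overline{\nabla}^2 u$ is correctly converted to $-\sum_k \tfrac{\varphi_{x_k}}{\varphi}u_{x_k}$ — since a single misplaced factor propagates into both equations. A secondary point worth checking is that nothing forces $\overline{g}$ to be trivial here: the "nontrivial" qualifier in the statement is an assumption ($f$, equivalently $u$, non-constant), not something to be derived, so no extra argument is needed on that front. Everything else is a direct, if somewhat lengthy, substitution, and I would organize it by first recording the two auxiliary lemmas ($\overline{Ric}$ in terms of $\varphi$, and $\overline{\nabla}^2 u$ in terms of $\varphi, u$) and then combining them.
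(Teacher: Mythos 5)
Your proposal is correct and follows essentially the same route as the paper: both compute $Ric_{\overline g}$ via the standard conformal change formula (your $e^{2\rho}$ parametrization with $\rho=-\log\varphi$ reproduces exactly the paper's expression $(n-2)\varphi^{-1}\mathrm{Hess}_g\varphi+[\varphi^{-1}\Delta_g\varphi-(n-1)\varphi^{-2}|\nabla_g\varphi|^2]g$), compute $\mathrm{Hess}_{\overline g}\,u$ from the conformal Christoffel symbols, substitute into $Ric_{\overline g}-\tfrac{m}{u}\mathrm{Hess}_{\overline g}\,u=\lambda\overline g$, and split into off-diagonal and diagonal components. The only blemish is a harmless typo in your off-diagonal Hessian (the last term should read $\tfrac{\varphi_{x_i}}{\varphi}u_{x_j}$, not $\tfrac{\varphi_{x_i}}{\varphi}u_{x_i}$).
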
  

This result was previously obtained by Case \cite[Proposition 4.13]{CaseIJM} by using a different approach.  For the sake of completeness we include here an alternative detailed proof. 

\subsubsection{Proof of Proposition \ref{thm1}}
\begin{proof} The first part of the proof will follow the trend of \cite{BPK}. Indeed, taking into account that $\overline{g}=\frac{1}{\varphi^{2}}g,$ where $g$ is the Euclidean metric,  we have 

\begin{eqnarray}
\label{e23}
Ric_{\overline{g}}=\frac{1}{\varphi^{2}}\{ (n-2)\varphi Hess_{g}\varphi +[\varphi \Delta_{g}\varphi - (n-1)|\nabla_{g}\varphi|^{2}]g\}.
\end{eqnarray} 
For more details see, for instance, \cite{Besse}. Hence, we may use (\ref{e23}) to rewrite the fundamental equation (\ref{quasiforu}) with respect to $\overline{g}$ as follows

\begin{equation}
\label{eq345}
\frac{\lambda}{\varphi^{2}}\delta_{ij}+\frac{m}{u}\left(Hess_{\overline{g}}\,u\right)_{ij} =\frac{1}{\varphi^{2}}\{ (n-2)\varphi\, \varphi_{x_ix_j} +[\varphi \Delta_{g}\varphi - (n-1)|\nabla_{g}\varphi|^{2}]g_{ij}\}.
\end{equation} 

On the other hand, we recall that 

\begin{eqnarray}
\label{a13}
\big(Hess_{\overline{g}}\,u\big)_{ij}=u_{x_{i}x_{j}}-\sum_{k}\overline{\Gamma}_{ij}^{k}f_{x_{k}},
\end{eqnarray} 
where $\overline{\Gamma}_{ij}^{k}$ are the Christoffel symbols with respect to $\overline{g}.$ We also recall that, for $i,$ $j$ and $k$ distinct, we have
\begin{eqnarray}
\label{a14}
\left\{\begin {array}{cc}
	\overline{\Gamma}_{ij}^{k}=0\hspace{1.0cm}, &\ \overline{\Gamma}_{ij}^{i}=-\frac{\varphi_{x_{j}}}{\varphi}, \\
	\overline{\Gamma}_{ii}^{k}=\frac{\varphi_{x_{k}}}{\varphi}\,\,\,\,\hbox{and}&\ \overline{\Gamma}_{ii}^{i}=-\frac{\varphi_{x_{i}}}{\varphi}. \end {array}\right.
\end{eqnarray} 
Therefore, combining (\ref{a13}) and (\ref{a14}), with $i\neq j,$ we deduce

\begin{equation}
\label{ag1}
\big(Hess_{\overline{g}}\,u\big)_{ij}=u_{x_{i}x_{j}}+\frac{\varphi_{x_{j}}}{\varphi}u_{x_{i}}+\frac{\varphi_{x_{i}}}{\varphi}u_{x_{j}}.
\end{equation} 
Moreover, by considering $i=j$, from (\ref{a13}) and (\ref{a14}), we immediately have that

\begin{equation}
\label{ag2}
\big(Hess_{\overline{g}}\,u\big)_{ii}=u_{x_{i}x_{i}}+2\frac{\varphi_{x_{i}}}{\varphi}u_{x_{i}}-\sum_{k}\frac{\varphi_{x_{k}}}{\varphi}u_{x_{k}}.
\end{equation} 
Next, it suffices to substitute (\ref{ag1}) into (\ref{eq345}), for $i\neq j,$  to obtain that
\[
\frac{m}{u}\big(u_{x_{i}x_{j}}+\frac{\varphi_{x_{j}}}{\varphi}u_{x_{i}}+\frac{\varphi_{x_{i}}}{\varphi}u_{x_{j}}\big)= (n-2)\frac{\varphi_{x_ix_j}}{\varphi}.
\] 
Similarly, substituting (\ref{ag2}) into (\ref{eq345}) we get 

\begin{eqnarray*}
\frac{m}{u}\big(u_{x_{i}x_{i}}+2\frac{\varphi_{x_{i}}}{\varphi}u_{x_{i}}-\sum_{k}\frac{\varphi_{x_{k}}}{\varphi}u_{x_{k}}\big)=(n-2)\frac{\varphi_{x_ix_i}}{\varphi}
+\frac{\Delta_g\varphi}{\varphi}-(n-1)\frac{|\nabla_{g}\varphi|^{2}}{\varphi^{2}}-\frac{\lambda}{\varphi^{2}}. 
\end{eqnarray*} This concludes the proof of Proposition \ref{thm1}.

\end{proof}

Our next result characterizes the quasi-Einstein manifolds conformal to the Euclidean space, whenever the conformal factor and the potential functions are  invariant under the action of an $(n-1)$-dimensional translation group. 

\begin{proposition}
\label{lemA}
Let  $(\mathbb{R}^{n},\,g),$ $n\ge 3,$ be a Euclidean space with coordinates $x=(x_{1},...,x_{n})$ and $g_{ij}=\delta_{ij}.$ Consider smooth functions $\varphi(\xi)$ and $u(\xi)$,  $\xi=\sum_{i=1}^{n}\alpha_{i}x_{i},$ $\alpha_{i}\in \mathbb{R}$, where without loss of generality we consider $\sum_{i=1}^n\alpha_i^2=1$.  Then $\overline{g}=\frac{1}{\varphi^{2}}g$ is a nontrivial $m$-quasi-Einstein metric with potential function $f=-m \log u$ and $\lambda\leq 0$ if, and only if, $\varphi$ and $u$ satisfy

\begin{eqnarray}
& &	(n-2)\frac{\varphi''}{\varphi}-\frac{m}{u}\Big(u''+2\frac{\varphi'}{\varphi}u'
	\Big)=0,\label{eqthmA1}\\	
& &	 \frac{\varphi''}{\varphi}-(n-1)\frac{(\varphi')^{2}}{\varphi^{2}}+m\frac{\varphi'}{\varphi}\frac{u'}{u}=\frac{\lambda}{\varphi^{2}}. \label{eqthmA2} 
\end{eqnarray} 
\end{proposition}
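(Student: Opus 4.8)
The plan is to specialize the two equations of Proposition \ref{thm1} to functions depending only on the single variable $\xi=\sum_i\alpha_i x_i$ with $\sum_i\alpha_i^2=1$, and to show that this system collapses exactly to \eqref{eqthmA1}–\eqref{eqthmA2}. First I would record the chain-rule identities for a function $h(\xi)$: one has $h_{x_i}=\alpha_i h'$ and $h_{x_ix_j}=\alpha_i\alpha_j h''$, where $h'=dh/d\xi$. Applying this to $\varphi$ and $u$ we get $\nabla_g\varphi$ with $|\nabla_g\varphi|^2=(\varphi')^2\sum_i\alpha_i^2=(\varphi')^2$, and $\Delta_g\varphi=\varphi''\sum_i\alpha_i^2=\varphi''$. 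So the normalization $\sum\alpha_i^2=1$ is what makes the gradient-norm and Laplacian terms clean.

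Next I would substitute into the first (off-diagonal, $i\neq j$) equation of Proposition \ref{thm1}. The left side becomes $\frac{m}{u}\bigl(\alpha_i\alpha_j u'' + \frac{\alpha_j\varphi'}{\varphi}\alpha_i u' + \frac{\alpha_i\varphi'}{\varphi}\alpha_j u'\bigr)=\alpha_i\alpha_j\,\frac{m}{u}\bigl(u''+2\frac{\varphi'}{\varphi}u'\bigr)$ and the right side becomes $\alpha_i\alpha_j(n-2)\frac{\varphi''}{\varphi}$. Hence whenever there exist indices $i\neq j$ with $\alpha_i\alpha_j\neq 0$, dividing by $\alpha_i\alpha_j$ yields \eqref{eqthmA1}. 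I should address the degenerate case where no such pair exists, i.e. at most one $\alpha_i$ is nonzero; since $\sum\alpha_i^2=1$ this forces $\xi=\pm x_{i_0}$ for a single index, so $\varphi$ and $u$ depend on one Cartesian coordinate. In that situation I would instead derive \eqref{eqthmA1} from the diagonal equations: writing out the diagonal equation for an index $i$ with $\alpha_i=0$ gives (after the substitution) $-\frac{m}{u}\frac{(\varphi')^2\cdot 0\dots}{}$; more carefully, the $\sum_k\frac{\varphi_{x_k}}{\varphi}u_{x_k}$ term equals $\frac{(\varphi')(u')}{\varphi}$ and the other terms vanish, so comparing the $i$ with $\alpha_i\neq0$ and $i$ with $\alpha_i=0$ diagonal equations and subtracting recovers \eqref{eqthmA1}. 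Either way \eqref{eqthmA1} holds, and it is equivalent to the off-diagonal system.

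Then I would substitute into the diagonal equation of Proposition \ref{thm1} for an index $i$. Using $u_{x_ix_i}=\alpha_i^2 u''$, $u_{x_i}=\alpha_i u'$, $\sum_k\frac{\varphi_{x_k}}{\varphi}u_{x_k}=\frac{\varphi' u'}{\varphi}$, $\varphi_{x_ix_i}=\alpha_i^2\varphi''$, $\Delta_g\varphi=\varphi''$, $|\nabla_g\varphi|^2=(\varphi')^2$, the diagonal equation reads
\[
\frac{m}{u}\Bigl(\alpha_i^2 u''+2\alpha_i^2\tfrac{\varphi'}{\varphi}u'-\tfrac{\varphi'u'}{\varphi}\Bigr)
=(n-2)\alpha_i^2\tfrac{\varphi''}{\varphi}+\tfrac{\varphi''}{\varphi}-(n-1)\tfrac{(\varphi')^2}{\varphi^2}-\tfrac{\lambda}{\varphi^2}.
\]
Now I would use \eqref{eqthmA1}, which I have just established, to replace $(n-2)\alpha_i^2\frac{\varphi''}{\varphi}$ on the right by $\alpha_i^2\frac{m}{u}\bigl(u''+2\frac{\varphi'}{\varphi}u'\bigr)$; this cancels the $\alpha_i^2$-proportional terms on both sides and leaves precisely
\[
-\frac{m}{u}\frac{\varphi' u'}{\varphi}=\frac{\varphi''}{\varphi}-(n-1)\frac{(\varphi')^2}{\varphi^2}-\frac{\lambda}{\varphi^2},
\]
which rearranges to \eqref{eqthmA2}. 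The converse is immediate: given $\varphi(\xi),u(\xi)$ satisfying \eqref{eqthmA1}–\eqref{eqthmA2}, reversing the chain-rule substitutions shows the two equations of Proposition \ref{thm1} hold for all $i\neq j$ and all $i$, so by that proposition $\overline g=\varphi^{-2}g$ is $m$-quasi-Einstein with potential $f=-m\log u$; nontriviality of $f$ is equivalent to $u$ nonconstant, which is part of the hypothesis. The only real subtlety, and the step I would be most careful about, is the degenerate case $\xi=\pm x_{i_0}$ in deducing \eqref{eqthmA1}, since there the off-diagonal equations are vacuous; everything else is a bookkeeping computation with the chain rule.
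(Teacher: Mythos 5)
Your proposal is correct and follows essentially the same route as the paper: specialize Proposition \ref{thm1} via the chain rule, obtain \eqref{eqthmA1} from the off-diagonal equations when some $\alpha_i\alpha_j\neq 0$ (and, in the degenerate case $\xi=\pm x_{i_0}$, by subtracting the two distinct diagonal equations), then use \eqref{eqthmA1} to cancel the $\alpha_i^2$-terms in the diagonal equation and recover \eqref{eqthmA2}. The paper organizes this as an explicit case split (a) and (b), but the computations coincide with yours.
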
 

\begin{proof} Let $\varphi(\xi)$ and $u(\xi)$ be functions 
depending on $\xi,$ where $\xi=\sum_{i=1}^{n}\alpha_{i}x_{i}$,  
$\alpha_{i}\in \mathbb{R}$. We first observe that, without loss of generality, 
we may assume that $\sum_{j=1}^n\alpha_j^2=1$. In fact, otherwise, 
we can consider 
$\bar\xi= \frac{1}{\sqrt{\sum_{j=1}^n\alpha_j^2}}\sum_{i=1}^n\alpha_ix_i.$  At the same time, we have 

\begin{eqnarray}
\label{eqw}
\left\{\begin {array}{cc}
	\varphi_{x_{i}}=\varphi'\alpha_{i},\hspace{1,0cm}&\ \varphi_{x_{i}x_{j}}=\varphi''\alpha_{i}\alpha_{j}, \\
	 u_{x_{i}}=u'\alpha_{i}, \hspace{1,00cm}&\ u_{x_{i}x_{j}}=u''\alpha_{i}\alpha_{j}. 
	 \end {array}\right.
\end{eqnarray} 
Since $\overline{g}=\frac{1}{\varphi^{2}}g,$ where $g$ is the Euclidean metric, the first equation of Proposition \ref{thm1}  yields, for $i\neq j$,

\begin{equation*}
(n-2)\frac{\varphi^{''}}{\varphi}\alpha_{i}\alpha_{j}-\frac{m}{u}\big(u^{''}\alpha_{i}\alpha_{j}+2\frac{\varphi^{'}}{\varphi}u^{'}\alpha_{i}\alpha_{j}\big)=0,
\end{equation*}  which can be rewritten as

\begin{equation*}
\alpha_{i}\alpha_{j}\Big[(n-2)\frac{\varphi^{''}}{\varphi}-\frac{m}{u}\big(u^{''}+2\frac{\varphi^{'}}{\varphi}u^{'}\big)\Big]=0, \qquad i\neq j. 
\end{equation*} In order to proceed we divide the proof in two cases.

\vspace{.2in}

${\bf a)}$ If there exists a pair $(i,j)$, $i\neq j$, such that $\alpha_{i}\alpha_{j}\neq 0,$ then 
we obtain

\begin{equation}
\label{eqkey1}
(n-2)\frac{\varphi^{''}}{\varphi}-\frac{m}{u}\big(u^{''}+2\frac{\varphi^{'}}{\varphi}u^{'}\big)=0.
\end{equation} 
Then, it follows from (\ref{eqw}) and the second equation of Proposition \ref{thm1} that  
\begin{eqnarray}
\frac{\lambda}{\varphi^{2}}&=&\alpha_{i}^{2}\Big((n-2)\frac{\varphi^{''}}{\varphi}-\frac{m}{u}u^{''}-2m\frac{\varphi^{'}u^{'}}{\varphi u}\Big)\nonumber\\&&+\frac{\varphi^{''}}{\varphi}-(n-1)\frac{(\varphi^{'})^{2}}{\varphi^{2}}+m\frac{\varphi^{'}u^{'}}{\varphi u}, 
\end{eqnarray} 
where we used the fact that $\sum_{k}\alpha_{k}^{2}=1$.
In particular, using the relation between $\varphi^{''}$ and $u^{''}$ obtained in (\ref{eqkey1}) we conclude that 

\begin{equation}
\label{eqkey2}
\frac{\varphi^{''}}{\varphi}-(n-1)\frac{(\varphi^{'})^{2}}{\varphi^{2}}+m\frac{\varphi^{'}u^{'}}{\varphi u}=\frac{\lambda}{\varphi^{2}}.
\end{equation} 
Hence, \eqref{eqkey1} and \eqref{eqkey2} show that $\varphi$ and $u$ must satisfy \eqref{eqthmA1} and \eqref{eqthmA2}. 

\vspace{.2in}

${\bf b)}$ On the other hand, if for all $i\neq j$ we have $\alpha_i\alpha_j=0,$ then $\xi$ is a multiple of one variable and  without loss of generality, we may consider  $\xi=x_n.$ In this case, we get 
\[
\varphi_{x_i}=\varphi^{'}\delta_{in},\qquad u_{x_i}=u^{'}\delta_{in},\qquad 
\varphi_{x_ix_j}=\varphi^{''}\delta_{in}\delta_{jn},\qquad 
u_{x_ix_j}=u^{''}\delta_{in}\delta_{jn}. 
\] 
Therefore, the first equation of Proposition  \ref{thm1}  is trivially satisfied. However,  
the second equation reduces to two equations obtained by taking $i\neq n$ and $i=n,$ respectively. Namely,  
\begin{eqnarray}
\label{firsteq} \frac{\varphi''}{\varphi}-(n-1)\frac{(\varphi')^2}{\varphi^2}+m\frac{\varphi'}{\varphi}\frac{u'}{u}=\frac{\lambda}{\varphi^2}
\end{eqnarray} and
\begin{eqnarray}
\label{secondeq} 
(n-1)\frac{\varphi''}{\varphi}-(n-1)\frac{(\varphi')^2}{\varphi^2}-m\frac{u''}{u}-m\frac{\varphi'}{\varphi}\frac{u'}{u}=\frac{\lambda}{\varphi^2}.
\end{eqnarray}
Subtracting \eqref{firsteq} from \eqref{secondeq}, we obtain  \eqref{eqthmA1}.
Moreover, subtracting \eqref{eqthmA1} from \eqref{secondeq}, we  get \eqref{eqthmA2}.    

Consequently, we conclude that in both cases, i.e., $a)$ and $b),$ the functions $\varphi$ and $u$ satisfy both equations \eqref{eqthmA1} and \eqref{eqthmA2}.

The converse of Proposition \ref{lemA} is a straightforward computation. So, we omit the details, leaving them to the interested reader.

\end{proof}

\section{Proof of the Main Results}
\label{sec3}

Before proving our main results, notice that when we consider a metric $\bar{g}$ on $\mathbb{R}^n$ homothetic to the Euclidean metric, then for any $m\neq 0,$ the nontrivial $m$-quasi Einstein metrics, whose potential function $u$ depends on $\xi,$  can only occur when $\lambda=0$ and the function $u$ is linear in $\xi$ as it was mentioned in Remark \ref{rm1}. This fact follows immediately from Proposition \ref{lemA}. In fact, if $\varphi\neq 0$ is constant, then  for any $m\neq 0$, Eq. \eqref{eqthmA1} is equivalent to saying that $u$ is linear in $\xi$ and \eqref{eqthmA2} is equivalent to $\lambda=0.$     

 \vspace{.1in} 

We are now ready to prove Theorems \ref{thmnovo} and \ref{thmB1},  where we are assuming that $\varphi$ is not constant.

\subsection{Proof of Theorem \ref{thmnovo}}
\begin{proof}
To begin with, since $\frac{\varphi''}{\varphi}= \left(\frac{\varphi'}{\varphi}\right)'+
\left(\frac{\varphi'}{\varphi}\right)^2,$ and similarly,
$\frac{u''}{u}= \left(\frac{u'}{u}\right)'+\left(\frac{u'}{u}\right)^2,$ we can rewrite the equations of Proposition \ref{lemA} as 
\begin{eqnarray}
\label{eq1anovo}
\left\{\begin {array}{c}
	(n-2)\left(\frac{\varphi'}{\varphi}\right)'
+(m+n-2)\left(\frac{\varphi'}{\varphi}\right)^2-m \left(\frac{u'}{u}\right)'	
-m\left(\frac{u'}{u}+\frac{\varphi'}{\varphi}\right)^2=0, 
\vspace*{.1in}\\ 
	 \frac{\varphi''}{\varphi}-(n-1)\frac{(\varphi')^{2}}{\varphi^{2}}+m\frac{\varphi'}{\varphi}\frac{u'}{u}=0. 
	 \end {array}\right.
\end{eqnarray} Taking into account that $m+n-2=0$ and $n-2\neq 0,$ the first equation reduces to 
\[
\left[ (\ln\varphi)'+(\ln u)'\right] '=-[ (\ln\varphi)'+(\ln u)']^2.
\]
Thereby, $\left(\frac{1}{(\ln \varphi)'+(\ln u)' }\right)'=1$ and hence, we obtain 
\[
(\ln\varphi)'+(\ln u)'=\frac{1}{\xi+C_1} \qquad \xi+C_1\neq 0. 
\]
From this relation, it follows that 
\begin{equation}
\label{equ7}
\varphi u=C_0(\xi+C_1), 
\end{equation} where $C_1$ and $C_0>0$ are constants.

In order to proceed, we substitute
\begin{equation}\label{varphiu}
\frac{\varphi'}{\varphi}=-\frac{u'}{u}+\frac{1}{\xi+C_1}, 
\end{equation} into the second equation of \eqref{eq1anovo}. Since we are assuming that $m+n-2=0,$  we infer 
 \begin{equation}\label{eqGp}  
 G'-(n-2)G^2-(n-2)G\frac{u'}{u}=0,  
\end{equation} where 
\begin{equation}\label{eqG}
G=-\frac{u'}{u}+\frac{1}{\xi+C_1}.
\end{equation}
We point out that $G\neq 0$ on an open set. In fact, otherwise \eqref{eqG} would imply  that $u$ is 
a multiple of $\xi+C_1$  and \eqref{equ7} would imply that $\varphi$ is constant, which contradicts the hypothesis of Theorem \ref{thmnovo}.

Replacing $u'/u$ in terms of $G$ given by \eqref{varphiu} into 
\eqref{eqGp}, we have
\[
\frac{G'}{G}-\frac{n-2}{\xi+C_1}=0.
\]
Upon integrating this expression we get  
\[
 \frac{G}{(\xi+C_1)^{n-2}}=e^c,\qquad\hbox{where}\,\, c\in \mathbb{R}.  
\]
Next, since $G$ is given in terms of $u$ by \eqref{eqG},  and $u>0$,  a new integration yields
\[
\log \frac{u}{|\xi+C_1|}=-\frac{e^c}{n-2}(\xi+C_1)^{n-1}+\tilde{c},
\]
consequently,
\[
\frac{u}{|\xi+C_1|}=C_2\,e^{-C_3(\xi+C_1)^{n-1}},
\]
where $C_2=e^{\tilde{c}}$ and $C_3=e^c/(n-2)$.  Therefore,  we deduce
\[
u=C_2|\xi+C_1|e^{-C_3(\xi+C_1)^{n-1}}, \,\,\,\, \hbox{with} \,\,\, C_2>0 \,\,\,  \hbox{and} \,\,\, C_3>0.
\]
Now, we can obtain $\varphi$ from this expression  for $u$ and \eqref{equ7}, which gives 
\[
\varphi=\mbox{sgn}(\xi+C_1) C_4\, e^{C_3(\xi+C_1)^{n-1}}, \quad \mbox{with } C_4>0.
\] 
 Moreover, taking into account that $f=-m\log u$ and $m=-(n-2),$ we immediately obtain
\[
f=(n-2)\Big[\log \big(C_2|\xi+C_1|\big)-C_3\big(\xi+C_1\big)^{n-1}\Big].
\]

Conversely, a straightforward computation shows that  $\varphi$ and $u$ satisfy 
\eqref{eqthmA1} and \eqref{eqthmA2}, when $\lambda=0$ and $m+n-2=0.$ This completes the proof of the theorem.

\end{proof}

\subsection{Proof of Theorem \ref{thmB1}} 

\begin{proof} It follows from Proposition  \ref{lemA} that   $\varphi$ and $u$ must satisfy \eqref{eqthmA1} and \eqref{eqthmA2}. Since $\lambda=0,$ it follows from  \eqref{eqthmA2} that 
\[
\frac{\varphi''}{\varphi}-(n-1)\left(\frac{\varphi'}{\varphi}\right)^2+m\frac{\varphi'}{\varphi}\frac{u'}{u}=0. 
\]
Moreover, we are assuming $\varphi'\neq 0$ and hence, multiplying this equation by $\varphi/\varphi'$ we get 
\[
\frac{\varphi''}{\varphi'}-(n-1)\frac{\varphi'}{\varphi}+ m\frac{u'}{u}=0, 
\]
whose integration yields  
\[
u=C \Big(\frac{\varphi^{n-1}}{\varphi'}\Big)^{\frac{1}{m}}, 
\] where $C$ is a positive constant, and it proves \eqref{uvarphi}.

In order to proceed, we substitute this function $u$ and its derivatives into \eqref{eqthmA1} to conclude that $\varphi$ must satisfy the following differential equation
\begin{equation}
\label{eqdifvarphi}
P\frac{\varphi''}{\varphi}-Q\left(\frac{\varphi'}{\varphi}\right)^2-R\left(\frac{\varphi''}{\varphi'}\right)^2 +\frac{\varphi'''}{\varphi'}=0,
\end{equation} where $P,\, Q$ and $R$ are the following constants
\begin{equation}\label{PQR}
P=2m-1+\frac{2(n-1)}{m}, \quad  Q=\frac{(n-1)^2}{m}+n-1 \quad\hbox{and} \quad R=1+\frac{1}{m}. 
\end{equation}

Now, we introduce the function
\begin{equation}
\label{wvarphi}
w(\varphi(s))=\left(\frac{d\varphi}{ds}\right)^2.
\end{equation} In particular, we have 
\[
\varphi''(s)=\frac{1}{2}\frac{d w}{d\varphi}\quad\hbox{and}\quad 
\frac{\varphi'''}{\varphi'}=\frac{1}{2} \frac{d^2w}{d^2\varphi}.
\]
Therefore, \eqref{eqdifvarphi} guarantees that $w(\varphi)$ must satisfy the following differential equation 
\[
\frac{P}{2\varphi}\frac{dw}{d\varphi}-\frac{Q}{\varphi^2}w(\varphi)-
\frac{R}{4w}\left(\frac{dw}{d\varphi} \right)^2+\frac{1}{2} \frac{d^2w}{d^2\varphi}=0,
\]
which can be rewritten as 
\begin{equation}
\label{eqdifw}
2\varphi^2 w\frac{d^2w}{d^2\varphi}-R\varphi^2\left( \frac{dw}{d\varphi}\right)^2
+2P\varphi w\frac{dw}{d\varphi}-4Q w^2=0.
\end{equation}

Proceeding, we also introduce the function $v$ as follows
\begin{equation}
\label{vvarphi}
v(\varphi)=\frac{1}{w}\frac{dw}{d\varphi}.
\end{equation} Whence, \eqref{eqdifw} reduces to the Riccati equation
\begin{equation}
\label{eqdifv}
\frac{dv}{d\varphi}+\Big(1-\frac{R}{2}\Big)v^2+\frac{P}{\varphi}v-\frac{2Q}{\varphi^2}=0.
\end{equation}

From now on, we divide the proof in the following cases:

\begin{enumerate}
\item[{\em i})]  $m=1;$
\item[{\em ii})] $m>1.$
\end{enumerate} Then we will deal with each case separately. 

\vspace{0.40cm}

To begin with, notice  that if $m=1,$ then $R=2,$ $P=2n-1,$ $Q=n(n-1)$ and \eqref{eqdifv} reduces to 
\[
\frac{dv}{d\varphi}+\frac{(2n-1)}{\varphi}v -\frac{2n(n-1)}{\varphi^2}=0, 
\]
whose solution is given by
\[
v(\varphi)=\frac{n}{\varphi}+\frac{C_0}{\varphi^{2n-1}}, 
\] where $C_{0}$ is a constant. Moreover, since $v(\varphi)$ was defined by \eqref{vvarphi}, upon integrating we get
\[
w(\varphi)=\widetilde{C}_2\varphi^n \exp\Big({-\frac{C_1}{\varphi^{2(n-1)} }}\Big),
\] where $\widetilde{C}_2$ is a positive constant and $C_1=\frac{C_0}{2(n-1)}$. Next, taking into account that $w(\varphi)$ was defined by \eqref{wvarphi}, upon integrating we obtain $\varphi$ implicitly given by 
\[
\int{\frac{exp\left(\frac{C_1}{2\varphi^{2(n-1)} }\right)}{\varphi^{\frac{n}{2}}} }d\varphi= C_2\,\xi+C_3,
\]  where $C_2=\sqrt{\widetilde{C}_2}\neq 0$ and $C_{3}$ are 
constants, which proves \eqref{intvarphi2} and this concludes the proof of the first item of Theorem \ref{thmB1}.

\vspace{.1in}

Before proceeding, notice that if $m\neq 1$, then $R\neq 2.$ In this situation, we first consider special solutions 
$v(\varphi)$ for \eqref{eqdifv} of the form
\[
v(\varphi)=\frac{a}{\varphi}, \qquad\hbox{where} \quad a\in\mathbb{R}. 
\]
For such a solution, \eqref{eqdifv} reduces to
\begin{equation}\label{eqalg}
\Big(1-\frac{R}{2}\Big) a^2+\Big( P-1\Big)a-2Q=0.
\end{equation}
We now define 
\begin{equation}\label{gammab}
 b=(P-1)^2+4Q(2-R).
\end{equation} Hence, it follows from  (\ref{PQR}) that 
\begin{eqnarray}
b&=& \left[ 2(m-1)+2\frac{(n-1)}{m}\right]^2+4\left[ \frac{(n-1)^2}{m}+n-1\right]\frac{(m-1)}{m}\nonumber\\
 &=&  4 \left[ (m-1)^2 +\frac{3(m-1)(n-1)}{m}+\frac{(n-1)^2}{m}  \right]\nonumber\\
 &=& 4\left[(m-1)^2+\frac{n-1}{m}(3m+n-4) \label{b} \right].
 \end{eqnarray}

From now on we assume that $m>1.$ In this case, we immediately obtain $3m+n-4>0$ and in particular, we have $b>0.$ Therefore, by solving \eqref{eqalg}, we have  two particular solutions for the Riccati equation \eqref{eqdifv} given by 
\[
v_j=\frac{a_j}{\varphi}, \qquad\hbox{for}  \quad j=1,2, 
\]
 where  
\begin{equation}\label{a1a2}
a_1=\frac{1}{2-R} \left[-(P-1)+ \sqrt{b }\right] \quad \hbox{and}\quad
a_2=\frac{1}{2-R} \left[-(P-1)- \sqrt{b}\right].
\end{equation}

Proceeding, consider the function  
\[
Z(\varphi)=\exp\left(\int \left(\frac{2-R}{2}\right) (v_1-v_2)\,d\varphi\right).
\]
Taking into account that $$v_1-v_2=\frac{2\sqrt{b}}{((2-R)\varphi)},$$ it follows that 
$
Z(\varphi)=\varphi^{\sqrt{b}}$. This implies that the general solution of \eqref{eqdifv} is given by
\begin{equation}\label{generalv}
v(\varphi)=\frac{a_2-C_1\varphi^{\sqrt{b}}a_1}{\varphi (1-C_1\varphi^{\sqrt{b}})}, 
\end{equation} where $C_1\in\mathbb{R}\setminus\{0\}$,  and $a_1,\,a_2$ and $b$ are the constants given by \eqref{a1a2} and \eqref{gammab}.
Thereby, since $v(\varphi)$ was defined by \eqref{vvarphi}, integrating we obtain 
\[
w(\varphi)=C_2\left( C_1\varphi^{\sqrt{b}}-1\right)^{\frac{2}{2-R}}\varphi^{a_2}
\qquad\quad  C_1\neq 0, \; C_2>0.
\]  

In order to determine $\varphi(\xi)$ and $u(\xi),$ we use \eqref{wvarphi} to infer
\begin{equation*}
\int{ \frac{d\varphi}{\left( C_1\varphi^{\sqrt{b}}-1\right)^{\frac{1}{2-R}}\varphi^{\frac{a_2}{2}}}}=C_2\xi+C_3. 
\end{equation*} 
Finally, it suffices to use (\ref{PQR}) to arrive at 
\[
\int{ \frac{d\varphi}{\left( C_1\varphi^{\sqrt{b}}-1\right)^{\frac{m}{m-1}}\varphi^{\frac{a}{2}}}}=C_2\xi+C_3, 
\] where, for simplicity, $a=a_{2}$.  Recall that \eqref{uvarphi} 
determines $u$ in terms of $\varphi$. This concludes the proof of theorem for the second case. 

Conversely, by using  (\ref{intvarphi1}) for $m=1$ and (\ref{intvarphi2}) for $m>1,$ a straightforward computation shows that $\varphi$ and $u$ satisfy (\ref{eqthmA1}) and \eqref{eqthmA2} for $\lambda=0.$  So, the proof is completed.
 \end{proof}

\begin{acknowledgement}
E. Ribeiro Jr would like to thank the Department of Mathematics - Universidade de Bras\'ilia, where part of this work was carried out, for the warm hospitality. 
\end{acknowledgement}

\end{document}